\newtheorem{theorem}{Theorem}
\newtheorem{lemma}{Lemma}
\renewcommand{\le}{\leqslant}
\renewcommand{\ge}{\geqslant}
\DeclareMathOperator{\cha}{char}
\begin{document}

\title{Normal form of
$m$-by-$n$-by-$2$ matrices for
equivalence}
\author{Genrich Belitskii%
\thanks{Partially supported by
Israel Science Foundation, Grant
216/98.}\\ Dept. of Mathematics,
Ben-Gurion University of the Negev,\\
Beer-Sheva 84105, Israel,
 genrich@cs.bgu.ac.il
 \and
Maxim Bershadsky,\\ Dept. of
Mathematics,
Ben-Gurion University of the Negev,\\
Beer-Sheva 84105, Israel,
maximb@math.bgu.ac.il
       \and
Vladimir V. Sergeichuk%
\thanks{
Partially supported by
FAPESP (S\~ao Paulo, Brazil),
processo 05/59407-6.}\\
Institute of Mathematics,
Tereshchenkivska 3, Kiev,
Ukraine\\sergeich@imath.kiev.ua}
\date{}

\maketitle

\begin{abstract}
We study $m\times n\times 2$
matrices up to equivalence and
give a canonical form of
$m\times 2\times 2$ matrices
over any field.

{\it AMS classification:} 15A21; 15A69

{\it Keywords:} Spatial matrices;
Tensors; Classification
 \end{abstract}

\section{Introduction and the main
results}

Complex $2\times 2\times 2$
matrices up to equivalence were
classified by Schwartz
\cite{[8]} and Duschek
\cite{[2]}. Canonical forms of
complex and real $2\times
2\times 2$ matrices for
equivalence were given by
Oldenburger
\cite{[5]}–--\cite{[7]}; they
are presented in \cite[Section
IV, Theorem 1.1]{[9]}. Ehrenborg
\cite{[3]} also got a canonical
form of complex $2\times 2\times
2$ matrices for equivalence
basing on a collection of
covariants that separates the
canonical matrices.

In this paper we give a
canonical form of $m\times
2\times 2$ matrices for
equivalence over any field
$\mathbb F$, but first we
establish when $m\times n\times
2$ matrices, whose two $m\times
n\times 1$ submatrices are in
the Kronecker canonical form for
matrix pencils, are equivalent
over $\mathbb F$. Using an
alternative method, the authors
recently obtained in \cite{2a} a
canonical form of $m\times
2\times 2$ matrices for
equivalence over a field of
characteristic different from
$2$.

Note that the canonical form
problem for $m\times n\times 3$
matrices for equivalence is
wild; this means that it
contains the problem of
classifying pairs of linear
operators and therefore it
contains the problem of
classifying an arbitrary system
of linear operators (see, for
example, \cite[Theorems 4.5 and
2.1]{[1]}).

All matrices and spatial
matrices in this article are
considered over an arbitrary
field $\mathbb F$. By an
$m\times n\times q$
\emph{spatial matrix} over
$\mathbb F$ we mean an array
\begin{equation}\label{1.0}
{\cal A} =[a_{ijk}]_{i=1}
^m{}_{j=1}^n{}_{k=1}^q,\qquad
a_{ijk}\in\mathbb F.
\end{equation}
Two $m\times n\times q$ matrices
${\cal A} = [a_{ijk}]$ and
${\cal B} = [b_{ijk}]$ are
\emph{equivalent} if there exist
nonsingular $m\times m$,
$n\times n$, and $q\times q$
matrices
\begin{equation}\label{dno}
R=[r_{ii'}],\qquad
S=[s_{jj'}],\qquad T=[t_{kk'}]
\end{equation}
such that
\begin{equation}\label{1.1}
b_{i'j'k'}:=\sum_{ijk}
a_{ijk}r_{ii'}s_{jj'} t_{kk'}.
\end{equation}

This notion arises in the theory
of forms: each trilinear form
$f: U\times V\times W\to \mathbb
F$ on vector spaces with bases
$\{u_i\}_{i=1}^m$,
$\{v_j\}_{j=1}^n$, and
$\{w_k\}_{k=1}^q$ is given by
the spatial matrix \eqref{1.0}
with $a_{ijk}:=f(u_i,v_j,w_k)$.
Its entries change by
\eqref{1.1} if we go to other
bases with the transition
matrices \eqref{dno}.

We will give the spatial matrix
\eqref{1.0} by the $q$-tuple of
$m\times n$ matrices
\begin{equation*}\label{1.3}
{\cal A}=\|A_1\,|\dots|\, A_q\|,
\qquad A_k=[a_{ijk}]_{ij},
\end{equation*}
(that is, by the list of its
\emph{horizontal slices}).

The transfer from ${\cal A}$ to
${\cal B}$ given by \eqref{1.1}
can be realized in two steps: by
the \emph{simultaneous
equivalence transformation} with
the horizontal slices
\begin{equation}\label{1.4}
\|C_1,\,|\dots|\,Cq\|:=
\|R^TA_1S\,|\dots|\,R^TA_qS\|,
\end{equation}
and then by the nonsingular
linear substitution
\begin{equation}\label{1.5}
B_1=C_1t_{11}+\dots+C_qt_{q1},\
\dots,\
B_q=C_1t_{1q}+\dots+C_qt_{qq},
\end{equation}
where $R$, $S$, and $T$ are the
matrices \eqref{dno}. The last
transformation can be made by
\emph{elementary operations} on
the set $\{C_1,\dots,Cq\}$ of
horizontal slices: interchange
any two slices, multiply one
slice by a non-zero scalar, and
add a scalar multiple of one
slice to another one. This
implies the following lemma.

\begin{lemma}\label{lem0}
Two spatial matrices are
equivalent if and only if one
can be transformed to the other
by a sequence of
\begin{itemize}
  \item[\rm(i)]
simultaneous equivalence
transformations with all
horizontal slices, and
  \item[\rm(ii)]
elementary operations on the set
of horizontal
slices.\hfill$\Box$
\end{itemize}
\end{lemma}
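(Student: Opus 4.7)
The lemma is essentially a restatement of the two-step decomposition described in the paragraph preceding it, so my plan is just to make that reasoning fully explicit.

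First I would verify the ``if'' direction. Any simultaneous equivalence transformation (i) is the instance of \eqref{1.1} in which $T$ is the identity, and so produces an equivalent spatial matrix. For (ii), an elementary operation on the list of horizontal slices corresponds to choosing $R=I$, $S=I$, and letting $T$ be the corresponding elementary matrix (a transposition matrix, a diagonal matrix with a single nonzero scalar on the diagonal, or an elementary transvection $I+\lambda E_{kl}$); in each case \eqref{1.5} carries out exactly the desired interchange, scaling, or row-addition on the slices. A composition of such operations is therefore an equivalence.

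For the ``only if'' direction I would appeal directly to the decomposition \eqref{1.4}--\eqref{1.5} already written in the text: any equivalence with data $(R,S,T)$ factors as the simultaneous $(R,S)$-transformation producing $\|C_1|\dots|C_q\|$, followed by the $T$-substitution \eqref{1.5}. The first factor is of type (i). For the second, observe that \eqref{1.5} says $(B_1,\dots,B_q)$ is obtained from $(C_1,\dots,C_q)$ by the linear substitution with matrix $T$; since $T$ is nonsingular, Gauss--Jordan elimination writes $T$ as a product of elementary matrices, and the corresponding substitutions are precisely the elementary operations of type (ii) applied to the slices. Hence the second factor is a sequence of operations of type (ii).

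There is no real obstacle here, only one small point to state carefully: the elementary operations on slices act on the $q$-tuple on the \emph{right}, so one must check the order of composition matches the factorization $T=E_1E_2\cdots E_N$ into elementary matrices; this is routine. Combining the two factorizations gives the claimed sequence of transformations of types (i) and (ii), completing the equivalence.
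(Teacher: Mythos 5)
Your proposal is correct and follows exactly the route the paper intends: the lemma is stated with an empty proof because it is meant to follow from the two-step decomposition \eqref{1.4}--\eqref{1.5} given just before it, together with the standard fact that a nonsingular $T$ factors into elementary matrices. Your write-up merely makes these steps explicit, so it matches the paper's argument.
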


We denote the $m$-by-$n$ zero
matrix by $0_{mn}$. The numbers
$m$ and $n$ may be zero: the
matrices $0_{m0}$ and $0_{0n}$
represent the linear mappings
$0\to {\mathbb F}^m$ and
${\mathbb F}^n\to 0$. For every
$p\times q$ matrix $M_{pq}$ we
have \[ M_{pq}\oplus
0_{m0}=\begin{bmatrix} M_{pq} \\
0_{mq}
\end{bmatrix},\qquad
M_{pq}\oplus
0_{0n}=\begin{bmatrix}
   M_{pq} & 0_{pn}
\end{bmatrix}.
\]

For each natural number $r$, we
define the $(r-1)\times r$
matrices
\begin{equation}\label{nfq}
F_r:=\begin{bmatrix}
 1&0&&0 \\
 &\ddots&\ddots&\\
 0&&1&0
 \end{bmatrix},\qquad
G_r:= \begin{bmatrix}
 0&1&&0 \\
 &\ddots&\ddots&\\
 0&&0&1
\end{bmatrix}.
\end{equation}
For each polynomial
\[
\chi(x)=x^l-u_1 x^{l-1}-\dots-
u_l\in\mathbb F[x],\qquad l\ge
1,
\]
we define the $l\times l$ matrix
\begin{equation}\label{dkr}
\Phi_{\chi}:=\begin{bmatrix}
0&&0&u_l\\1&\ddots&&\vdots\\&\ddots&0&u_2\\
0&&1& u_1
\end{bmatrix},
\end{equation}
whose characteristic polynomial
is $\chi(x)$.

We also define the direct sum of
matrix pairs:
\[
(A,B)\oplus(A',B'):=(A\oplus A',
B\oplus B').
\]

The next theorem will be proved
in Section \ref{sec2}, it
extends Theorem 4.4 of
\cite{[1]} dealing with spatial
matrices over an algebraically
closed field.

\begin{theorem} \label{t1}
Over any field $\mathbb F$,
every $m\times n\times 2$ matrix
${\cal A}=\|A_1| A_2\|$, in
which $\min(m,n)$ is less than
or equal to the number of
elements of\/ $\mathbb F$, is
equivalent to some ${\cal
B}=\|B_1| B_2\|$, in which
\begin{equation}\label{4.1}
(B_1,B_2)=\bigoplus_{i=1}^{p_1}(F_{r_i},G_{r_i})\oplus
\bigoplus_{j=1}^{p_2}
(F_{s_j}^T,G_{s_j}^T)\oplus
\bigoplus_{k=1}^q
(I_{l_k},\Phi_{\chi_k}),
\end{equation}
$p_1,p_2,q$ are nonnegative
integers, all $r_i,s_j,l_k$ are
natural numbers, and each
polynomial $\chi_k$ has degree
$l_k$ and is a power of an
irreducible polynomial. This sum
is determined by ${\cal A}$
uniquely, up to permutation of
summands and up to simultaneous
replacement of all
$\Phi_{\chi_k}$ by
$\Phi_{\eta_k}$ with
\begin{equation}\label{4.1aa}
{\eta}_k(x):=\varepsilon_k(d-x
b)^{l_k}\chi_k\left(\frac{x
a-c}{d-x b}\right),
\end{equation}
where
\begin{itemize}
  \item
$a,b,c,d$ are arbitrary elements
of\/ $\mathbb F$ satisfying
$ad-bc\ne 0$ and
\begin{equation}\label{hdo}
a+b\lambda_k\ne 0 \qquad\text{if
\
$\chi_k(x)=(x-\lambda_k)^{l_k}$},
\end{equation}
  \item
each $ \varepsilon_k$ is a
nonzero element of\/ $\mathbb F$
that makes the coefficient of
the highest order term of
${\eta}_k(x)$ equalling\/ $1$
$($the characteristic polynomial
${\eta}_k(x)$ must be
\emph{monic}$)$.
\end{itemize}
\end{theorem}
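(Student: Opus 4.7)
My plan is to reduce the problem to Kronecker's canonical form (KCF) for matrix pencils over $\mathbb F$ and then exploit the $\mathrm{GL}_2(\mathbb F)$-freedom on the two horizontal slices provided by part~(ii) of Lemma~\ref{lem0}.

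For existence, I would first apply a strict equivalence (operation~(i) of Lemma~\ref{lem0}) to bring $(A_1,A_2)$ into KCF over $\mathbb F$: a direct sum of singular blocks $(F_r,G_r)$, $(F_s^T,G_s^T)$, finite regular blocks $(I_l,\Phi_\chi)$ with $\chi$ a prime power, and possibly some \emph{infinity} blocks whose first matrix is nilpotent and second is the identity. Only the infinity blocks prevent \eqref{4.1}, so I would remove them with one elementary substitution (operation~(ii)) given by $T=\bigl[\begin{smallmatrix}a&c\\b&d\end{smallmatrix}\bigr]$, which sends $(A_1,A_2)\mapsto(aA_1+bA_2,\,cA_1+dA_2)$. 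Taking $b\ne 0$ makes the new first matrix invertible on every infinity block, while a finite block $(I_l,\Phi_{\chi_k})$ survives as regular exactly when $a+b\lambda\ne 0$ for each root $\lambda\in\mathbb F$ of $\chi_k$. The regular part has total size at most $\min(m,n)$, and an infinity block of positive size is being eliminated, so at most $\min(m,n)-1$ values of $a$ are forbidden; the hypothesis $|\mathbb F|\ge\min(m,n)$ then leaves an admissible~$a$. A second strict equivalence restores KCF, now free of infinity blocks, which is exactly \eqref{4.1}.

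For uniqueness, suppose two direct sums as in \eqref{4.1} are equivalent. By Lemma~\ref{lem0} they differ by strict equivalences together with one substitution $T\in\mathrm{GL}_2(\mathbb F)$. Strict equivalence is KCF equivalence of pencils and preserves the multisets $\{r_i\},\{s_j\},\{\chi_k\}$. The substitution $T$ preserves the singular indices $r_i,s_j$ since they are intrinsic invariants of the singular part and are unchanged by reparametrization of the pencil; on each regular block $T$ acts by $(I,\Phi_{\chi_k})\mapsto(aI+b\Phi_{\chi_k},\,cI+d\Phi_{\chi_k})$. Left-multiplying by $(aI+b\Phi_{\chi_k})^{-1}$, valid precisely when $a+b\lambda_k\ne 0$ (condition~\eqref{hdo}), gives $(I,M)$; the eigenvalues of $M$ in $\overline{\mathbb F}$ are the M\"obius images $(c+d\lambda)/(a+b\lambda)$ of the eigenvalues of $\Phi_{\chi_k}$, so $M$ is similar to $\Phi_{\eta_k}$ with $\eta_k$ obtained by clearing denominators in $\chi_k((xa-c)/(d-xb))$ and renormalizing to monic---exactly formula~\eqref{4.1aa}.

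The main points requiring care are: $\mathrm{GL}_2$-invariance of the singular Kronecker indices, cleanest in the module-theoretic picture of pencils as sheaves on $\mathbb P^1$ where $T$ is a coordinate change; that $\eta_k$ is again a power of an irreducible polynomial, which follows because a M\"obius substitution of an irreducible polynomial is irreducible over $\mathbb F$; and the counting step in existence. The last of these is the principal obstacle: the hypothesis $\min(m,n)\le|\mathbb F|$ is used \emph{exactly} to beat the $\min(m,n)-1$ possible forbidden values of~$a$ that can arise when every finite regular block has a linear irreducible factor.
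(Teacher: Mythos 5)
Your proposal follows essentially the same route as the paper: reduce $(A_1,A_2)$ to Kronecker canonical form, kill the infinity blocks with a single substitution $T$ chosen by counting forbidden parameter values against $|\mathbb F|\ge\min(m,n)$, and obtain uniqueness from the invariance of the singular indices together with the M\"obius action \eqref{4.1aa} on the regular blocks (the paper's Lemma~\ref{lem1}). The only differences are bookkeeping: the paper bounds the number of regular summands and case-splits on whether a nilpotent block $(I,J(0))$ is present, while you bound the forbidden values directly by $\min(m,n)-1$; and where you pass from ``same eigenvalues'' to ``similar to $\Phi_{\eta_k}$'' the paper instead invokes indecomposability of the transformed pair to conclude that its Frobenius form is a single block.
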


Let ${\cal A}=[a_{ijk}]_{i=1}
^m{}_{j=1}^n{}_{k=1}^q$ be a
spatial matrix. Consider the
sets
\begin{equation}\label{mdo}
{\cal S} = \{A_1,\dots,
A_q\},\quad \Tilde{\cal S} =
\{\Tilde{A}_1,\dots,
\Tilde{A}_n\},\quad
\Tilde{\Tilde{{\cal S}}} =
\{\Tilde{\Tilde{A}}_1,\dots,
\Tilde{\Tilde{A}}_m\}
\end{equation}
of its $m\times n$, $m\times q$,
and $n\times q$ submatrices
\[
A_k:= [a_{ij k}]_{ij},\qquad
\Tilde{A}_j:=[a_{ijk}]_{ik},\qquad
\Tilde{\Tilde{A}}_i:=
[a_{ijk}]_{jk}.
\]
We say that $\cal A$ is
\emph{regular} if each of the
sets \eqref{mdo} is linearly
independent.

Suppose $\cal A$ is non-regular
and let $q'$, $n'$, $m'$ be the
ranks of the sets \eqref{mdo}.
Make the first $q'$ matrices in
${\cal S}$ linearly independent
and the others zero by
elementary operations on the set
${\cal S}$. Reduce the ``new''
$\Tilde{\cal S}$ and then the
``new'' $\Tilde{\Tilde{\cal S}}$
in the same way. We obtain a
spatial matrix ${\cal B} =[b_{ij
k}]$, whose $m'\times n'\times
q'$ submatrix
\[
{\cal B}' =
[b_{ijk}]_{i=1}^{m'}{}^{n'}_{j=1}{}
^{q'}_{k=1}
\]
is regular, and whose entries
outside of ${\cal B}'$ are zero;
${\cal B}'$ is called a
\emph{regular part} of ${\cal
A}$. Two spatial matrices of the
same size are equivalent if and
only if their regular parts are
equivalent \cite[Lemma
4.7]{[1]}. Hence, it suffices to
give canonical forms of regular
spatial matrices. The following
theorem will be proved in
Section \ref{sec3}.

\begin{theorem}\label{t2}
Over any field\/ $\mathbb F$,
each regular $m\times n\times q$
matrix $\cal A$ with $n\le 2$
and $q\le 2$ is equivalent to
one of the spatial matrices:
\begin{equation}\label{a9}
\begin{Vmatrix}
  \,1\,
\end{Vmatrix}
\qquad(1\times 1\times 1),
\end{equation}
\begin{equation}\label{a10}
\begin{Vmatrix}
 \, 1&0\,\\0&1
\end{Vmatrix}
\qquad(2\times 2\times 1),
\end{equation}
\begin{equation}\label{a10a}
\left|\!\left|\!\begin{array}{c|c}
    1&0\\ 0&1
\end{array}\!\right|\!\right|
\qquad(2\times 1\times 2),
\end{equation}
\begin{equation}\label{a10b}
\left|\!\left|\!\begin{array}{cc|cc}
    1&0&0&1
\end{array}\!\right|\!\right|
\qquad(1\times 2\times 2),
\end{equation}
\begin{equation}\label{a12}
\left|\!\left|\!\begin{array}{cc|cc}
    1&0&0&0 \\ 0&1&0&0\\ 0&0&0&1
\end{array}\!\right|\!\right|
\qquad(3\times 2\times 2),
\end{equation}
\begin{equation}\label{a12a}
\left|\!\left|\!\begin{array}{cc|cc}
    1&0&0&0 \\ 0&1&1&0\\ 0&0&0&1
\end{array}\!\right|\!\right|
\qquad(3\times 2\times 2),
\end{equation}
\begin{equation}\label{a14z}
\left|\!\left|\!\begin{array}{cc|cc}
    1&0&0&0 \\ 0&1&0&0\\
    0&0&1&0\\0&0&0&1
\end{array}\!\right|\!\right|
\qquad(4\times 2\times 2),
\end{equation}
\begin{equation}\label{a11}
{\cal A}(v)
:=\left|\!\left|\!\begin{array}{cc|cc}
    1&0&0& v\\ 0&1&1&0
\end{array}\!\right|\!\right|
\qquad(v\in\mathbb F,\ 2\times
2\times 2),
\end{equation}
\begin{equation}\label{a11b}
{\cal
B}(v):=\left|\!\left|\!\begin{array}{cc|cc}
    1&0&0& v\\ 0&1&1&1
\end{array}\!\right|\!\right|
\quad(\cha \mathbb F= 2;\
v\in\mathbb F,\ 2\times 2\times
2).
\end{equation}

These spatial matrices are
pairwise inequivalent except for
the following cases:
\begin{itemize}
  \item
If $\cha \mathbb F\ne 2$, then
${\cal A}(v)$ is equivalent to
each ${\cal A}(v')$ with
\begin{equation}\label{mdl1}
v'=vz,\qquad 0\ne z\in\mathbb
F^2:=\{a^2\,|\,a\in\mathbb F\}.
\end{equation}

  \item
If $\cha \mathbb F= 2$, then
${\cal A}(v)$ is equivalent to
each ${\cal A}(v')$ with
\begin{equation}\label{md2}
v'=\frac{\alpha v+\beta }{\gamma
v+\delta}\,,\qquad
\begin{matrix}
\alpha,\beta,\gamma,\delta
\in{\mathbb F}^2,\\
\alpha\delta+\beta\gamma\ne
0,\quad \gamma v+\delta\ne 0,
\end{matrix}
\end{equation}
and ${\cal B}(v)$ is equivalent
to each ${\cal B}(v')$ with
\begin{equation}\label{mspu}
v'=v+\beta+\beta^2,\qquad
\beta\in\mathbb F.
\end{equation}
\end{itemize}

In particular, if\/ $\mathbb F$
is algebraically closed, then
each regular $m\times n\times q$
matrix $\cal A$ with $n\le 2$
and $q\le 2$ is equivalent to
exactly one of the following
spatial matrices:
\eqref{a9}--\eqref{a14z}, ${\cal
A}(0)$, and
\begin{equation}\label{a11be}
\left|\!\left|\!\begin{array}{cc|cc}
    1&0&0& 0\\ 0&0&0&1
\end{array}\!\right|\!\right|
\qquad(2\times 2\times 2).
\end{equation}
\end{theorem}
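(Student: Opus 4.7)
The approach is to invoke Theorem~\ref{t1} to list the canonical forms consistent with $n\le 2$, $q\le 2$, and regularity; compute the action of the M\"obius substitution \eqref{4.1aa} on the parametric families to verify \eqref{mdl1}, \eqref{md2}, \eqref{mspu}; and finally specialize to algebraically closed fields.

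The cases $(n,q)\ne(2,2)$ are immediate: regularity forces small $m$ and a unique canonical form (\eqref{a9} for $(1,1)$, \eqref{a10} for $(2,1)$, \eqref{a10a} for $(1,2)$). For the main case $(n,q)=(2,2)$, Theorem~\ref{t1} applies since $\min(m,2)\le 2\le|\mathbb F|$. Each summand $(F_r,G_r)$, $(F_s^T,G_s^T)$, $(I_l,\Phi_\chi)$ contributes $r$, $s-1$, $l$ columns (and $r-1$, $s$, $l$ rows), regularity rules out empty rows/columns (so $r,s\ge 2$), and the total column count equals $n=2$. Enumerating yields exactly six possible decompositions: $(F_2,G_2)$, $(F_3^T,G_3^T)$, $(F_2^T,G_2^T)^{\oplus 2}$, $(I_1,[u])\oplus(F_2^T,G_2^T)$, $(I_2,\Phi_\chi)$, and $(I_1,[\lambda_1])\oplus(I_1,[\lambda_2])$. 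Row/column permutations identify the first four with \eqref{a10b}, \eqref{a12a}, \eqref{a14z}, \eqref{a12} (after normalizing the isolated $[u]$ to $[0]$ via an unconstrained M\"obius on the single $\chi$-block); the last two produce the $2\times 2\times 2$ parametric family. Pairwise inequivalence of \eqref{a10b}--\eqref{a14z} then follows from the uniqueness assertion of Theorem~\ref{t1}.

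For the $2\times 2\times 2$ parametric family with $\chi(x)=x^2-u_1x-u_2$, I reduce to a standard shape: if $\cha\mathbb F\ne 2$ the shift $x\mapsto x-u_1/2$ kills $u_1$ and produces $\mathcal{A}(v)$; if $\cha\mathbb F=2$, shifting is ineffective, and instead $u_1=0$ gives $\mathcal{A}(v)$ directly while $u_1\ne 0$ scales to $\chi=x^2+x+v$, yielding $\mathcal{B}(v)$. Substituting $\chi(x)=x^2-v$ into \eqref{4.1aa} gives
\[
\eta(x)=\varepsilon\bigl[(xa-c)^2-v(d-xb)^2\bigr]=\varepsilon\bigl[(a^2-vb^2)x^2+2(vbd-ac)x+(c^2-vd^2)\bigr].
\]
In $\cha\mathbb F\ne 2$, vanishing of the middle coefficient forces $vbd=ac$, and case analysis ($b=0$ vs.\ $b\ne 0$) reduces $v'=(vd^2-c^2)/(a^2-vb^2)$ to $v'\in v\mathbb F^{\times 2}$, matching \eqref{mdl1}. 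In $\cha\mathbb F=2$ the middle term is automatically absent, leaving $v'=(d^2v+c^2)/(b^2v+a^2)$ with $(d^2,c^2,b^2,a^2)\in(\mathbb F^2)^4$ and $a^2d^2+b^2c^2=(ad-bc)^2\ne 0$, matching \eqref{md2}. For $\chi(x)=x^2+x+v$ in $\cha\mathbb F=2$, the requirement that the new linear coefficient again equals $1$ forces $a^2+ab+vb^2=ad+bc$; a short computation yields $v'=v+\beta+\beta^2$ with $\beta=c/a$ (when $b=0$) or $\beta=(a+d)/b$ (when $b\ne 0$), matching \eqref{mspu}.

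The algebraically closed case then follows: $\mathbb F^{\times 2}=\mathbb F^\times$ collapses $\mathcal{A}(v)$ to $\{\mathcal{A}(0),\mathcal{A}(1)\}$, and $\mathcal{A}(1)\sim(I_1,[1])\oplus(I_1,[-1])$ becomes \eqref{a11be} after the slice substitution $B_1\mapsto B_1-B_2$ and relabeling of eigenvalues to $\{0,1\}$; in $\cha\mathbb F=2$, $\{\beta^2+\beta:\beta\in\mathbb F\}=\mathbb F$ by surjectivity of Frobenius, so $\mathcal{B}(v)$ is always reducible and equivalent to \eqref{a11be} as well. The principal obstacle will be the M\"obius computation producing the Artin--Schreier relation \eqref{mspu} for $\mathcal{B}(v)$, together with verifying that the dichotomy $\mathcal{A}$ vs.\ $\mathcal{B}$ is M\"obius-invariant in $\cha\mathbb F=2$ (since $u_1'=\varepsilon u_1(ad+bc)$ is nonzero whenever $u_1\ne 0$), ensuring the two parametric families are genuinely distinct.
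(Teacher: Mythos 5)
Your overall route is the same as the paper's: enumerate the Kronecker types compatible with $n=q=2$ and regularity via Theorem~\ref{t1}, reduce the residual family to $\|I_2\,|\,\Phi_\chi\|$ with $\chi$ quadratic, normalize $\chi$ to $x^2-v$ or $x^2+x+v$ according to the characteristic, and then compute the M\"obius action on $\chi$; your explicit formulas (the middle coefficient $2(vbd-ac)$, the expression $v'=(d^2v+c^2)/(b^2v+a^2)$ in characteristic $2$, the Artin--Schreier relation with $\beta=c/a$ or $(a+d)/b$, and the invariance of the dichotomy via $u'=\varepsilon u(ad+bc)$) all agree with what the paper obtains through its Lemma~\ref{lem2}.

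There is one genuine gap in how you justify the equivalence criterion. You apply the uniqueness clause of Theorem~\ref{t1} -- ``equivalent iff related by \eqref{4.1aa}'' -- directly to $\|I_2\,|\,\Phi_\chi\|$ for an arbitrary quadratic $\chi$. But that clause is stated only for canonical forms \eqref{4.1}, in which every $\chi_k$ is a power of an irreducible polynomial. When $\chi$ splits with two distinct roots, $\|I_2\,|\,\Phi_\chi\|$ is not of the form \eqref{4.1}: its canonical form is $(I_1,[\lambda])\oplus(I_1,[\mu])$, and Theorem~\ref{t1} then governs the simultaneous M\"obius action on the unordered pair $\{\lambda,\mu\}$, not on the quadratic itself. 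This case is not vacuous: it is exactly ${\cal A}(v)$ with $v$ a nonzero square when $\cha\mathbb F\ne 2$, and ${\cal B}(v)$ with $v=\beta+\beta^2$ when $\cha\mathbb F=2$ -- i.e., precisely the orbits your statements \eqref{mdl1} and \eqref{mspu} are supposed to describe in full. You must either redo the computation on the split form and check that the induced action on the product of the two linear factors coincides with \eqref{4.1aa} applied to $\chi$, or prove the criterion for arbitrary monic quadratics directly. The paper does the latter: its Lemma~\ref{lem2} establishes the ``iff'' for all ${\cal D}(u,v)$, and its sufficiency direction contains the one nontrivial extra step your sketch omits, namely verifying that $(cI_2+d\Phi_\chi)(aI_2+b\Phi_\chi)^{-1}$ is not a scalar matrix (which would make equality of characteristic polynomials insufficient for similarity); this is ruled out because $(c,d)=\lambda(a,b)$ would contradict $ad-bc\ne 0$. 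With that lemma supplied, the rest of your plan goes through.
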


\section{Proof of Theorem
\ref{t1}}\label{sec2}

We say that two pairs of
matrices of the same size are
\emph{equivalent} if the
matrices of the first pair are
simultaneously equivalent to the
matrices of the second pair.

\begin{lemma}\label{lem1}
Let $(I_l,\Phi_{\chi})$ and
$(I_l,\Phi_{\eta})$ be two
matrix pairs given by arbitrary
monic polynomials ${\chi}$ and
${\eta}$ of degree $l$. Let
\[T:=\begin{bmatrix}
a&c\\b&d
\end{bmatrix},\qquad ad-bc\ne 0,\]
be a nonsingular matrix.

{\rm(a)} If the pair
\begin{equation}\label{ageh}
(aI_{l}+b\Phi_{\chi},
cI_{l}+d\Phi_{\chi})
\end{equation}
is equivalent to
$(I_l,\Phi_{\eta})$, then
\begin{equation}\label{4.1aal}
{\eta}(x)=\varepsilon(d-x
b)^{l}\chi_k\left(\frac{x
a-c}{d-x b}\right)
\end{equation}
for some $\varepsilon\in\mathbb
F$.

{\rm(b)} If \eqref{4.1aal} holds
then the characteristic
polynomials of
\begin{equation}\label{dpx}
(cI_{l}+d\Phi_{\chi})\cdot
(aI_{l}+b\Phi_{\chi})^{-1}
\end{equation}
and $\Phi_{\eta}$ are equal.
\end{lemma}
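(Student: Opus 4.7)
The plan is to deduce both parts from a single computation of the characteristic polynomial of
\[
M := (cI_l + d\Phi_\chi)(aI_l + b\Phi_\chi)^{-1},
\]
which is well-defined in both situations: in (a) because $aI_l+b\Phi_\chi$ must be invertible (being equivalent to $I_l$), and in (b) because the expression \eqref{dpx} itself presupposes invertibility of $aI_l+b\Phi_\chi$.

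For part (a), I would first unpack the equivalence hypothesis: there exist nonsingular $R,S$ with $R^T(aI_l+b\Phi_\chi)S=I_l$ and $R^T(cI_l+d\Phi_\chi)S=\Phi_\eta$. Solving the first equation for $S$ and substituting into the second gives $R^T M (R^T)^{-1}=\Phi_\eta$, so $M$ is similar to $\Phi_\eta$ and therefore $\det(xI_l-M)=\eta(x)$.

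The main computation, which feeds both parts, is to evaluate $\det(xI_l-M)$ explicitly. Since $I_l$ and $\Phi_\chi$ commute, one factors
\[
xI_l-M = \bigl[(xa-c)I_l+(xb-d)\Phi_\chi\bigr](aI_l+b\Phi_\chi)^{-1},
\]
so that
\[
\det(xI_l-M)=\frac{\det((xa-c)I_l+(xb-d)\Phi_\chi)}{\det(aI_l+b\Phi_\chi)}.
\]
The key identity is the polynomial identity $\det(\alpha I_l+\beta\Phi_\chi)=\prod_i(\alpha+\beta\lambda_i)=(-\beta)^l\chi(-\alpha/\beta)$, immediate from the factorization $\chi(y)=\prod_i(y-\lambda_i)$ over an algebraic closure since the eigenvalues of $\alpha I_l+\beta\Phi_\chi$ are $\alpha+\beta\lambda_i$. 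Applying it and simplifying $-(xa-c)/(xb-d)=(xa-c)/(d-xb)$ yields
\[
\det(xI_l-M)=\frac{(d-xb)^l\,\chi\!\bigl((xa-c)/(d-xb)\bigr)}{\det(aI_l+b\Phi_\chi)}.
\]

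For part (a), equating this expression with $\eta(x)$ produces \eqref{4.1aal} with $\varepsilon=1/\det(aI_l+b\Phi_\chi)$, which is a well-defined nonzero scalar. For part (b), I observe that $\eta(x)$ and $\det(xI_l-M)$ are both monic of degree $l$, and each is a nonzero scalar multiple of the same polynomial $(d-xb)^l\chi((xa-c)/(d-xb))$ (by \eqref{4.1aal} for the former, by the display above for the latter); the invertibility of $aI_l+b\Phi_\chi$ ensures that this common polynomial indeed has degree exactly $l$ in $x$, so uniqueness of the monic normalization forces the two to coincide. The main obstacle is the bookkeeping around the determinant identity, keeping signs and the substitution $-\alpha/\beta$ straight and verifying that everything remains valid as a polynomial identity in $x$ when $b=0$ or when $d-xb$ vanishes for particular values of $x$.
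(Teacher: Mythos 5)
Your proposal is correct and follows essentially the same route as the paper: reduce to the similarity of $(cI_l+d\Phi_{\chi})(aI_l+b\Phi_{\chi})^{-1}$ to $\Phi_{\eta}$, factor $xI_l-M$ through $(aI_l+b\Phi_{\chi})^{-1}$, and read off the characteristic polynomial as $(d-xb)^{l}\chi\bigl((xa-c)/(d-xb)\bigr)\det(aI_l+b\Phi_{\chi})^{-1}$, with part (b) following from the same display by comparing two monic degree-$l$ scalar multiples of one degree-$l$ polynomial. Your added details (the explicit $R,S$ unpacking of equivalence and the eigenvalue identity $\det(\alpha I_l+\beta\Phi_{\chi})=\prod_i(\alpha+\beta\lambda_i)$) only make explicit what the paper leaves implicit.
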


\begin{proof}
(a) Since the pair \eqref{ageh}
is equivalent to
$(I_l,\Phi_{\eta})$,
$aI_{l}+b\Phi_{\chi}$ is
nonsingular, and so the pair
\eqref{ageh} is equivalent to
\begin{equation}\label{4.4}
\left(I_{l},\
(cI_{l}+d\Phi_{\chi})\cdot
(aI_{l}+b\Phi_{\chi})^{-1}\right).
\end{equation}
Hence \eqref{dpx} is similar to
$\Phi_{\eta}$ and their
characteristic polynomials are
equal:
\begin{align} \nonumber
\eta(x)&=\det \left[xI_l-
(cI_{l}+d\Phi_{\chi})\cdot
(aI_{l}+b\Phi_{\chi})^{-1}\right]
  \\ \nonumber
&=\det
\left[\left[x(aI_{l}+b\Phi_{\chi})-
(cI_{l}+d\Phi_{\chi})\right]\cdot
(aI_{l}+b\Phi_{\chi})^{-1}\right]
   \\ \nonumber
&=\det
\left[(xa-c)I_{l}-(d-xb)\Phi_{\chi}\right]
\cdot\det
(aI_{l}+b\Phi_{\chi})^{-1}
   \\ \nonumber
&=(d-xb)^l\det
\left(\frac{xa-c}{d-xb}I_{l}-
\Phi_{\chi}\right) \cdot\det
(aI_{l}+b\Phi_{\chi})^{-1}
   \\
&=(d-xb)^l\chi
\left(\frac{xa-c}{d-xb}\right)
\cdot\det
(aI_{l}+b\Phi_{\chi})^{-1}.
\label{nsf}
\end{align}
This proves \eqref{4.1aal}.

(b) This statement follows from
\eqref{nsf}.
\end{proof}

Recall \cite{wan} that each
square matrix $A$ over an
arbitrary field $\mathbb F$ is
similar to a matrix of the form
$\Phi=\Phi_{\chi_1}\oplus\dots\oplus
\Phi_{\chi_q},$
where $\chi_1,\dots,\chi_q$ are
powers of an irreducible
polynomials and $\Phi_{\chi_k}$
are defined in \eqref{dkr}. The
matrix $\Phi$ is called the
\emph{Frobenius canonical form}
of $A$ and is determined by $A$
uniquely up to permutations of
summands.

Each pair $(A_1,A_2)$ of
matrices of the same size is
equivalent to a pair of the form
\begin{multline}\label{4.2}
(B_1,B_2)=\bigoplus_{i=1}^{p_1}
(F_{r_i},G_{r_i})\oplus
\bigoplus_{j=1}^{p_2}
(F_{s_j}^T,G_{s_j}^T)\\ \oplus
\bigoplus_{k=1}^{q_1}
(I_{l_k},\Phi_{\chi_k})\oplus
\bigoplus_{k=q_1+1}^{q}
(J_{l_k}(0),I_{l_k}),
\end{multline}
where $p_1,p_2,q_1,q_2$ are
nonnegative integers, $F_r$ and
$G_r$ are defined in
\eqref{nfq}, each polynomial
$\chi_k$ has degree $l_k$ and is
a power of an irreducible
polynomial, and
\[
J_l(\lambda) =
\begin{bmatrix}
  \lambda&&&
0\\
  1&\lambda&&\\
  &\ddots&\ddots&\\
0&&1&\lambda
\end{bmatrix} \quad
\text{($l$-by-$l$)}.
\]
The pair \eqref{4.2} is
determined by $(A_1,A_2)$
uniquely up to permutation of
summands and is called the
\emph{Kronecker canonical form}
of $(A_1,A_2)$ (see, for
example, \cite[Section
1.8]{gab+roi}).

\begin{proof}[Proof of Theorem
\ref{t1}] \emph{Step 1}. Let
$(A_1,A_2)$ be a pair of
matrices of the same size and
let \eqref{4.2} be its Kronecker
canonical form. In this step, we
prove that for each nonsingular
matrix
\[
T=\begin{bmatrix} a&c\\b&d
\end{bmatrix}\in\mathbb
F^{2\times 2},\qquad ad-bc\ne 0,
\]
the Kronecker canonical form of
the pair
\begin{equation}\label{4.3}
(C_1,C_2)=(aB_1+bB_2,\,cB_1+dB_2)
\end{equation}
has the same number $p_1+p_2+q$
of direct summands as
\eqref{4.2} and, after a
suitable permutation of its
summands, it has the same first
$p_1+p_2$ summands as
\eqref{4.2} and the same sizes
$l_1\times l_1,\dots,l_q\times
l_q$ of the remaining $q$
summands as \eqref{4.2}.

A matrix pair is
\emph{decomposable} if it is
equivalent to a direct sum of
pairs of smaller sizes. All
direct summands in \eqref{4.2}
are indecomposable. The
transformation \eqref{4.3} takes
them into indecomposable matrix
pairs. Indeed, if it takes a
summand $\cal P$ into a
decomposable $\cal R$, then the
inverse transformation (given by
the matrix $T^{-1}$) takes $\cal
R$ into a decomposable one,
which is equivalent to $\cal P$,
contrary to the
indecomposability of all direct
summands of \eqref{4.2}.

All indecomposable pairs of
$(r-1)\times r$ or $r\times
(r-1)$ matrices are equivalent
to $(F_r,G_r)$ or, respectively,
$(F_r^T,G_r^T)$. Hence, though
transformations \eqref{4.3} may
spoil the direct summands
$(F_{r_i},G_{r_i})$ and
$(F_{s_j}^T,G_{s_j}^T)$ in
\eqref{4.2}, but they are
restored by equivalence
transformations.
\medskip

\noindent\emph{Step 2}.  Suppose
${\cal A}=\|A_1| A_2\|$
satisfies the hypotheses of
Theorem \ref{t1}. In this step,
we reduce ${\cal A}$ by
equivalence transformations to
some ${\cal B}=\|B_1| B_2\|$
with $(B_1,B_2)$ of the form
\eqref{4.1}.

From the start, we reduce
$(A_1,A_2)$ to the form
\eqref{4.2}.

Thereupon in the case $q_1<q$ we
reduce the pair \eqref{4.2} to a
pair of the form \eqref{4.1}
(with other $\chi_1,\dots,
\chi_{q_1}$) as follows. The
transformation \eqref{4.3} with
\eqref{4.2} given by
\[
T=\begin{bmatrix}
  1&0\\b&1
\end{bmatrix}, \qquad b\ne 0,
\]
takes the direct sum of the last
$q$ summands into
\begin{equation}\label{jdst}
\bigoplus_{k=1}^{q_1} (I_{l_k}+b
\Phi_{\chi_k},\Phi_{\chi_k})\oplus
\bigoplus_{k=q_1+1}^{q}
(J_{l_k}(b),I_{l_k}).
\end{equation}

If some $I_{l_k}+b
\Phi_{\chi_k}$ is singular, then
$\chi_k(x)=(x-b^{-1})^{l_k}$.
Indeed, $0$ is an eigenvalue of
$I_{l_k}+b \Phi_{\chi_k}$, hence
$I_{l_k}+b \Phi_{\chi_k}$ has an
eigenvalue in $\mathbb F$, and
so $\Phi_{\chi_k}$ is similar to
a Jordan block. Further, this
Jordan block must be
$J_{l_k}(-b^{-1})$.

In view of the hypotheses of
Theorem \ref{t1}, $\min(m,n)$ is
less than or equal to the number
of elements of\/ $\mathbb F$.
Since $q_1<q\le \min(m,n)$, the
number $q_1$ of the summands
$(I_{l_k},\Phi_{\chi_k})$ in
\eqref{4.2} is less than or
equal to the number of nonzero
elements of\/ $\mathbb F$.

First suppose that one of these
summands is
$(I_{l_k},J_{l_k}(0))$. Then
there exists a nonzero $b\in
\mathbb F$ such that
$\chi_k(x)\ne (x-b^{-1})^{l_k}$
for all $k\le q_1$, this means
that all $I_{l_k}+b
\Phi_{\chi_k}$ are nonsingular.
We take such $b$ and reduce
\eqref{jdst} to the form
\begin{equation}\label{jts}
(I_{l_1},\Phi_{\eta_1})\oplus\dots
\oplus (I_{l_q},\Phi_{\eta_q})
\end{equation}
by equivalence transformations.

Now suppose that there are no
summands $(I_{l_k},J_{l_k}(0))$.
Then the second matrix in each
of the last $q$ summands of
\eqref{4.2} is nonsingular. We
interchange the matrices $B_1$
and $B_2$ in the pair
\eqref{4.2} and reduce its last
$q$ summands to the form
\eqref{jts}.
\medskip

\noindent\emph{Step 3}. Suppose
${\cal A}=\|A_1| A_2\|$ is
equivalent both to ${\cal
B}=\|B_1| B_2\|$ with
$(B_1,B_2)$ of the form
\eqref{4.1} and to another
${\cal B}'=\|B'_1| B'_2\|$ with
\begin{equation}\label{4.1dh}
(B'_1,B'_2)=\bigoplus_{i=1}^{p'_1}
(F_{r'_i},G_{r'_i})\oplus
\bigoplus_{j=1}^{p'_2}
(F_{s'_j}^T,G_{s'_j}^T)\oplus
\bigoplus_{k=1}^{q'}
(I_{l'_k},\Phi_{\eta_k}).
\end{equation}
Let us prove that \eqref{4.1dh}
coincides, after a suitable
permutation of its summands,
with \eqref{4.1} except for
$\chi_k$ and $\eta_k$, and that
\eqref{4.1aa} is fulfilled.

Since ${\cal B}$ and  ${\cal
B}'$ are equivalent, by Lemma
\ref{lem0} $(B'_1,B'_2)$ is the
Kronecker canonical form of some
pair $(C_1,C_2)$ of the form
\eqref{4.3} with $ad-bc\ne 0$.
In view of Step 1,
$p'_1,p'_2,q'$ and all
$r_i',s_j',l_k'$ coincide with
$p_1,p_2,q$ and all
$r_i,s_j,l_k$ after a suitable
permutation of the summands of
\eqref{4.1dh}.

The transformation \eqref{4.3}
converts each summand
$(I_{l_k},\Phi_{\chi_k})$ of
\eqref{4.1} to the matrix pair
\begin{equation}\label{age}
(aI_{l_k}+b\Phi_{\chi_k},
cI_{l_k}+d\Phi_{\chi_k}),
\end{equation}
which is equivalent to
$(I_{l_k},\Phi_{\eta_k})$. The
matrix $aI_{l_k}+b\Phi_{\chi_k}$
is nonsingular; this means that
if $\Phi_{\chi_k}$ is similar to
some Jordan block
$J_{l_k}(\lambda_k)$, then
$a+b\lambda_k\ne 0$; we have the
condition \eqref{hdo}. Due to
Lemma \ref{lem1}(a), $\eta_k(x)$
is represented in the form
\eqref{4.1aa}.

Conversely, let $(B_1,B_2)$ of
the form \eqref{4.1} and
\eqref{4.1dh} coincide with
except for $\chi_k$ and $\eta_k$
that satisfy \eqref{4.1aa}. By
Lemma \ref{lem1}(b), the
characteristic polynomials of
the matrices
\begin{equation}\label{vsp}
(cI_{l_k}+d\Phi_{\chi_k})\cdot
(aI_{l_k}+b\Phi_{\chi_k})^{-1}
\end{equation}
and $\Phi_{\eta_k}$ are equal
for each $k$. Since
$(I_{l_k},\Phi_{\chi_k})$ is
indecomposable, in view of Step
1 the matrix pair \eqref{age} is
indecomposable too, hence the
matrix \eqref{vsp} is
indecomposable with respect to
similarity and its Frobenius
canonical form is
$\Phi_{\eta_k}$. Therefore, each
$\|I_{l_k}|\Phi_{\chi_k}\|$ is
equivalent to
$\|I_{l_k}|\Phi_{\eta_k}\|$, and
so $\cal A$ is equivalent to
$\cal B$.
\end{proof}

\section{Proof of Theorem
\ref{t2}}\label{sec3}

\begin{lemma}\label{lem2}
A spatial matrix
\begin{equation}\label{msp}
{\cal
D}(u,v):=\left|\!\left|\!\begin{array}{cc|cc}
    1&0&0& v\\ 0&1&1&u
\end{array}\!\right|\!\right|,\qquad
u,v\in\mathbb F,
\end{equation}
is equivalent to ${\cal
D}(u',v')$  if and only if there
exist $a,b,c,d\in \mathbb F$
such that
\begin{equation}\label{ngt}
ad-bc\ne 0,\qquad
a^2+uab-vb^2\ne 0
\end{equation}
and
\begin{equation}\label{vso}
u'=\frac{2ac+uad+ucb
-2vbd}{a^2+uab-vb^2}, \qquad
v'=\frac{-c^2-ucd+vd^2}
{a^2+uab-vb^2}.
\end{equation}
\end{lemma}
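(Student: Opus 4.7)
I would first recognize that ${\cal D}(u,v) = \|I_2 \mid \Phi_\chi\|$ where $\chi(x) = x^2 - ux - v$ (from the companion-matrix definition \eqref{dkr} with $l = 2$), and similarly ${\cal D}(u',v') = \|I_2 \mid \Phi_{\chi'}\|$ with $\chi'(x) = x^2 - u'x - v'$. By Lemma~\ref{lem0} together with the explicit substitution \eqref{1.4}--\eqref{1.5}, the two spatial matrices are equivalent iff there exist invertible $R, S \in \mathbb F^{2\times 2}$ and an invertible $T = \begin{pmatrix} a & c \\ b & d \end{pmatrix}$ (that is, $ad - bc \neq 0$) satisfying $R^T(aI_2 + b\Phi_\chi)S = I_2$ and $R^T(cI_2 + d\Phi_\chi)S = \Phi_{\chi'}$. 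The first equation forces $\det(aI_2 + b\Phi_\chi) \neq 0$; a one-line determinant computation gives $\det(aI_2 + b\Phi_\chi) = a^2 + uab - vb^2$, yielding the second part of \eqref{ngt}.

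Eliminating $R$ from the first equation via $R^T = S^{-1}(aI_2 + b\Phi_\chi)^{-1}$, the whole system collapses to the single similarity condition $M \sim \Phi_{\chi'}$, where $M := (aI_2 + b\Phi_\chi)^{-1}(cI_2 + d\Phi_\chi)$. Here I would use the fact that a $2\times 2$ matrix is similar to the companion matrix of its characteristic polynomial iff it is non-scalar. So I need the brief check that $M$ is non-scalar whenever $ad - bc \neq 0$: if $M = \lambda I_2$, then $cI_2 + d\Phi_\chi = \lambda(aI_2 + b\Phi_\chi)$, and since $\Phi_\chi$ is itself non-scalar this forces $c = \lambda a$ and $d = \lambda b$, contradicting $ad - bc \neq 0$. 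Consequently, equivalence of the spatial matrices amounts to the characteristic polynomials of $M$ and $\Phi_{\chi'}$ being equal.

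Since the characteristic polynomial is invariant under cyclic permutation inside a product, I can apply Lemma~\ref{lem1}(b) (which is stated with the inverse on the right) to obtain
\[
\text{char poly of } M \ = \ \varepsilon (d - xb)^2 \chi\!\left(\frac{xa - c}{d - xb}\right),
\]
normalized monic by $\varepsilon$. Expanding $\chi(t) = t^2 - ut - v$ and collecting powers of $x$, the coefficient of $x^2$ is exactly $a^2 + uab - vb^2$ (fixing $\varepsilon$), while the coefficients of $x^1$ and $x^0$, divided by that leading one and negated, reproduce precisely the expressions for $u'$ and $v'$ in \eqref{vso}. The converse direction runs the same argument backwards: given $a,b,c,d$ satisfying \eqref{ngt}--\eqref{vso}, the characteristic polynomials match, hence there is an $S$ conjugating $M$ to $\Phi_{\chi'}$, and then $R^T := S^{-1}(aI_2 + b\Phi_\chi)^{-1}$ exhibits the equivalence. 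The only genuinely substantive step is the non-scalar check above — without it one cannot pass from ``same characteristic polynomial'' to ``similar to companion matrix'' — together with the bookkeeping needed to reconcile the left-inverse form of $M$ arising here with the right-inverse form in Lemma~\ref{lem1}; everything else is routine expansion of the Möbius-type action.
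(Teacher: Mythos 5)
Your proof is correct and follows essentially the same route as the paper's: identify ${\cal D}(u,v)=\|I_2\,|\,\Phi_\chi\|$, reduce via Lemma~\ref{lem0} to equivalence of the pair $(aI_2+b\Phi_\chi,\,cI_2+d\Phi_\chi)$ with $(I_2,\Phi_\eta)$, extract \eqref{vso} from the characteristic-polynomial computation of Lemma~\ref{lem1}, and for the converse rule out the scalar case of $(cI_2+d\Phi_\chi)(aI_2+b\Phi_\chi)^{-1}$ by the identical computation showing it would force $ad-bc=0$. Your phrasing of that last step (a non-scalar $2\times2$ matrix is cyclic, hence similar to the companion matrix of its characteristic polynomial) is in fact slightly cleaner than the paper's detour through the Frobenius canonical form, but it is the same argument.
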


\begin{proof}
Notice that \[{\cal
D}(u,v)=\|I_{2}|\Phi_{\chi}\|,\qquad
{\cal
D}(u',v')=\|I_{2}|\Phi_{\eta}\|,\]
where
\[
\chi(x):=x^2-ux-v,\qquad
\eta(x):=x^2-u'x-v'.
\]
\medskip

 ``$\Longrightarrow$''. Let
$\|I_{2}|\Phi_{\chi}\|$ and
$\|I_{2}|\Phi_{\eta}\|$ be
equivalent. By Lemma \ref{lem0},
there exists a nonsingular
matrix \[\begin{bmatrix}
a&c\\b&d
\end{bmatrix},\qquad ad-bc\ne 0,\]
such that the pairs
\begin{equation}\label{des}
(aI_2+b\Phi_{\chi},
cI_2+d\Phi_{\chi}),\qquad
(I_2,\Phi_{\eta})
\end{equation}
are equivalent. Then
$aI_2+b\Phi_{\chi}$ is
nonsingular; i.e., \[\det(
aI_2+b\Phi_{\chi})=a^2+uab-vb^2\ne
0.\] By Lemma \ref{lem1}(a),
$\eta(x)$ satisfies
\eqref{4.1aal}, this means that
for some nonzero $\varepsilon$
\begin{multline}\label{smf}
\eta(x)=\varepsilon\left[(xa-c)^2-
u(xa-c)(d-xb)-v(d-xb)^2\right]\\
     =\varepsilon\left[
x^2(a^2+uab-vb^2)+x(-2ac-uad-ucb
+2vbd)\right.\\
+\left.(c^2+ucd-vd^2)\right]
=x^2-u'x-v'.
\end{multline}
Therefore,
$\varepsilon=(a^2+uab-vb^2)^{-1}$
and the conditions \eqref{ngt}
and \eqref{vso} hold true.
\medskip

 ``$\Longleftarrow$''.
Conversely, let \eqref{ngt} and
\eqref{vso} hold. Then
\eqref{smf} is fulfilled and we
have \eqref{4.1aal}. By Lemma
\ref{lem1}(b), the
characteristic polynomials of
\begin{equation}\label{dpf}
(cI_2+d\Phi_{\chi})\cdot
(aI_2+b\Phi_{\chi})^{-1}
\end{equation}
and $\Phi_{\eta}$ are equal.
Since \eqref{dpf} is 2-by-2,
this implies that its Frobenius
canonical form is either
$\Phi_{\eta}$, or a direct sum
of two 1-by-1 Frobenius blocks
$\lambda I_1\oplus \mu I_1$ for
some $\lambda,\mu\in\mathbb F$.

In the last case,
$\eta(x)=(x-\lambda)(x-\mu)$.
But $\eta(x)$ is a power of an
irreducible polynomial. Hence,
$\lambda=\mu$ and \eqref{dpf} is
$\lambda I_2$. We get
consecutively
\begin{gather*}
 cI_{2}+d\Phi_{\chi}=\lambda
(aI_{2}+b\Phi_{\chi}),\qquad
(c-\lambda a)I_2=(\lambda
b-d)\Phi_{\chi},\\
c-\lambda a=\lambda b-d=0,\qquad
(c,d)=\lambda(a,b),
\end{gather*}
contrary to $ad-bc=0$.

Therefore, \eqref{dpf} is
similar to $\Phi_{\eta}$, the
pairs \eqref{des} are
equivalent, and so
$\|I_{2}|\Phi_{\chi}\|$ is
equivalent to
$\|I_{2}|\Phi_{\eta}\|$.
\end{proof}

\begin{proof}[Proof of Theorem
\ref{t2}] Let $\cal A$ be a
regular $m\times n\times q$
matrix with $n\le 2$ and $q\le
2$.
\medskip

\noindent\emph{Step 1.} Let us
prove that ${\cal A}$ is
equivalent to at least one of
the spatial matrices
\eqref{a9}--\eqref{a11b}. This
is clear if ${\cal A}$ is
$m\times n\times 1$ with $n\le
2$: indeed, since ${\cal
A}=\|A\|$ is regular, it reduces
by elementary transformations
\eqref{1.4} to \eqref{a9} or
\eqref{a10}.

So we suppose that ${\cal A}$ is
$m\times n\times 2$ with $n\le
2$. By Theorem \ref{t1}, ${\cal
A}$ is equivalent to some ${\cal
B}=\|B_1| B_2\|$ with $(B_1,
B_2)$ of the form \eqref{4.1}.
Since ${\cal A}$ is regular,
\eqref{4.1} does not have the
summands $(F_{1},G_{1})$ and
$(F_{1}^T,G_{1}^T)$. If $m=1$ or
$n=1$, then $(B_1, B_2)$ is
$(F_{2},G_{2})$ or
$(F_{2}^T,G_{2}^T)$, we have
\eqref{a10b} or \eqref{a10a}.

It remains to consider ${\cal
A}$ of size $m\times 2\times 2$
with $m\ge 2$. Then $(B_1, B_2)$
is one of the pairs:
\begin{gather}\label{ndiu}
(F_{3}^T,G_{3}^T),\quad
(F_{2}^T,G_{2}^T)\oplus
(F_{2}^T,G_{2}^T),\quad
(F_{2}^T,G_{2}^T)\oplus
(I_1,J_1(\lambda)),
    \\ \label{ndiu1}
(I_1,J_1(\lambda))\oplus
(I_1,J_1(\mu)), \quad
(I_{2},\Phi_{\chi}).
\end{gather}
The first and the second pairs
give \eqref{a12a} and
\eqref{a14z}. In the third pair
we take $\lambda=0$ (because
$\|1|\lambda\|$ and $\|1|0\|$
are equivalent) and obtain
\eqref{a12}. In the fourth pair,
$\lambda\ne \mu$ since $\cal A$
is regular, and so it is
equivalent to
$(I_{2},\Phi_{\chi})$ with
$\chi(x)=(x-\lambda)(x-\mu)$.

Hence, the spatial matrices that
are given by \eqref{ndiu1} are
equivalent to ${\cal D}(u,v)$
defined in \eqref{msp}.

If $\cha \mathbb F\ne 2$, then
each ${\cal D}(u,v)$ is
equivalent to ${\cal D}(0,v')$
for some $v'$ due to Lemma
\ref{lem2}: substituting
\[(a,b,c,d):=(1,0,-u/2,1)\]
in \eqref{ngt} and \eqref{vso},
we obtain $u'=0$. This gives
\eqref{a11}.

If $\cha \mathbb F=2$, then each
${\cal D}(u,v)$ is equivalent to
${\cal D}(0,v')$ or ${\cal
D}(1,v')$: for each $u\ne 0$ we
get $u'=1$ putting
\[(a,b,c,d):=(1,0,0,u^{-1})\]
in \eqref{ngt} and \eqref{vso}.
This gives \eqref{a11} and
\eqref{a11b}.
\medskip

\noindent \emph{Step 2.} Let us
prove that $\cal A$ is
equivalent to exactly one of the
spatial matrices
\eqref{a9}--\eqref{a11b} up to
replacements
\eqref{mdl1}--\eqref{mspu}.

Let two distinct spatial
matrices among
\eqref{a9}--\eqref{a11b} be
equivalent. Then they have the
same size, and so they are
$3\times 2\times 2$ or $2\times
2\times 2$. The spatial matrices
\eqref{a12} and \eqref{a12a} are
inequivalent in view of Theorem
\ref{t1} since the corresponding
decompositions \eqref{4.1} are
$(F_{2}^T,G_{2}^T)\oplus
(I_1,J_1(0))$ and
$(F_{3}^T,G_{3}^T)$. Hence, they
are \eqref{a11} or \eqref{a11b}.

Let $\cha \mathbb F\ne 2$, and
let ${\cal A}(v)$ be equivalent
to ${\cal A}(v')$. By Lemma
\ref{lem2} there exist $a,b,c,d$
satisfying \eqref{ngt} and
\eqref{vso} with $u=u'=0$. Then
the equalities \eqref{vso}
ensure
\[
ac-vbd=0,\qquad
v'=\frac{-c^2+vd^2}{a^2-vb^2},
\]
and so
\begin{align*}
v'(a^2-vb^2)^2&=(a^2-vb^2)(-c^2+vd^2)
=-a^2c^2+a^2vd^2+vb^2c^2-v^2b^2d^2\\
&=-(ac-vbd)^2+v(ad-bc)^2=v(ad-bc)^2.
\end{align*}
We have \eqref{mdl1} with
\[
z=\left(\frac{a^2-vb^2}{ad-bc}\right)^2.
\]
Conversely, if \eqref{mdl1}
holds, then ${\cal A}(v)$ is
equivalent to ${\cal A}(v')$ due
to Lemma \ref{lem2} since the
conditions \eqref{ngt} and
\eqref{vso} are fulfilled with
\[
u=u'=0,\qquad
(a,b,c,d):=(1,0,0,z^{-1/2}) .
\]

Let $\cha \mathbb F=2$. If
${\cal D}(0,v)$ is equivalent to
${\cal D}(u',v')$, then by
\eqref{vso} $u'=0$. Hence ${\cal
A}(v)$ and ${\cal B}(v')$
(defined in \eqref{a11} and
\eqref{a11b}) are inequivalent
for all $v$ and $v'$.

Due to Lemma \ref{lem2}, ${\cal
A}(v)$ and ${\cal A}(v')$ are
equivalent if and only if the
conditions \eqref{ngt} and
\eqref{vso} with $u=u'=0$ hold
for some $a,b,c,d\in \mathbb F$.
The first condition in
\eqref{vso} is the identity,
putting
\[(\alpha,\beta,\gamma,\delta):=
(d^2,c^2,b^2,a^2) \] in the
other conditions gives the
conditions \eqref{md2}.

Let ${\cal B}(v)$ be equivalent
to ${\cal B}(v')$. Then there
exist $a,b,c,d$ such that the
conditions \eqref{ngt} and
\eqref{vso} hold for $u=u'=1$.

We first suppose that $b=0$. The
conditions \eqref{vso} take the
form $1=ad/a^2$ (and so $a=d\ne
0$) and
\[
v'=\frac{c^2+ca+va^2} {a^2}=v+
\frac{c}{a}+\frac{c^2}{a^2};
\]
this gives \eqref{mspu} with
$\beta=c/a.$

Let now $b\ne 0$. Denote
\[\alpha :=a/b,\qquad\gamma:=c/b,\qquad
\delta:=d/b.\] Remembering that
$\cha\mathbb F=0$ and $u=u'=1$,
rewrite \eqref{vso} in the form
\begin{equation*}\label{vsod}
1=\frac{\alpha\delta+\gamma}
{\alpha^2+\alpha+v}\,, \qquad
v'=\frac{\gamma^2+\gamma \delta
+v\delta ^2}{\alpha^2+\alpha+v}.
\end{equation*}
From the first equality
determine
\[
\gamma=v+\alpha +\alpha^2+\alpha
\delta,
\]
substitute it to the second:
\begin{align*}
v'&= \frac{(v+\alpha
+\alpha^2)^2+(\alpha
\delta)^2+(v+\alpha
+\alpha^2)\delta+\alpha \delta^2
+v\delta
^2}{\alpha^2+\alpha+v}\\[1mm] &=
v+\alpha +\alpha^2+\delta+
\delta^2
 =v+(\alpha+
\delta)+(\alpha+ \delta)^2,
\end{align*}
and obtain \eqref{mspu}.

Conversely, let
$v'=v+\beta+\beta^2$ for some
nonzero $\beta \in\mathbb F$.
The conditions \eqref{ngt} and
\eqref{vso} hold for $u=u'=1$
and
\[
(a,b,c,d):=
  \begin{cases}
   (\beta,1,v', 0)& \text{if
$v'\ne 0$}, \\
(1,0,\beta,1) & \text{If
$v'=0$};
  \end{cases}
\]
hence ${\cal B}(v)$ and ${\cal
B}(v')$ are equivalent by Lemma
\ref{lem2}.
\medskip

\noindent \emph{Step 3.} Let
$\mathbb F$ be algebraically
closed. Then $\mathbb
F^2=\mathbb F$. If $\cha \mathbb
F\ne 2$, each ${\cal A}(v)$ is
equivalent to ${\cal A}(0)$ or
${\cal A}(1)$ (if $v\ne 0$, we
put $z=1/v$ in \eqref{mdl1}).
The spatial matrix ${\cal A}(1)$
is equivalent to \eqref{a11be}
since it reduces to
\eqref{a11be} by the following
transformations: add the first
slice $I_2$ to the second,
reduce the second to the form
$J_1(0)\oplus J_1(2)$ by
simultaneous similarity
transformations with the slices,
divide the second by $2$, and
subtract the second slice from
the first:
\[
\arraycolsep=1mm {\cal A}(1)
     \to
\left|\!\left|\!\begin{array}{cc|cc}
    1&0&1&1\\ 0&1&1&1
\end{array}\!\right|\!\right|
    \to
\left|\!\left|\!\begin{array}{cc|cc}
    1&0&0& 0\\ 0&1&0&2
\end{array}\!\right|\!\right|
\to
\left|\!\left|\!\begin{array}{cc|cc}
    1&0&0& 0\\ 0&1&0&1
\end{array}\!\right|\!\right|
\to
\left|\!\left|\!\begin{array}{cc|cc}
    1&0&0& 0\\ 0&0&0&1
\end{array}\!\right|\!\right|.
\]

Suppose $\cha \mathbb F=2$. Then
all ${\cal A}(v)$ are equivalent
to ${\cal A}(0)$ since the
conditions \eqref{md2} hold for
$v'=0$ and $(\alpha, \beta,
\gamma, \delta):=(1,v,0,1)$. All
${\cal B}(v)$ are equivalent to
${\cal B}(0)$ since the equation
\eqref{mspu} with $v'=0$ is
solvable for $\beta$. We reduce
the second slide of ${\cal
B}(0)$ to the form $J_1(0)\oplus
J_1(1)$  by simultaneous
similarity transformations with
the slices, and then subtract
the second slice from the first.
\end{proof}

\end{document}